\newtheorem{theorem}{Theorem}
\newtheorem{corollary}[theorem]{Corollary}
\newtheorem{proposition}[theorem]{Proposition}
\theoremstyle{remark}
\theoremstyle{definition}
\newtheorem{definition}[theorem]{Definition}
\newcommand{\R}{\mathbb{R}}
\newcommand{\N}{\mathbb{N}}
\newcommand{\Z}{\mathbb{Z}}
\newcommand{\sinc}{\textnormal{sinc}}
\newcommand{\sgn}{\textnormal{sgn}}
\title[Gibbs--Wilbraham Phenomenon for Sampling Series]{On the Gibbs--Wilbraham Phenomenon for Sampling and Interpolatory Series}
\author{Keaton Hamm}
\address{Department of Mathematics, University of Arizona, Tucson, AZ, 85721, USA\footnote{The work for this article was done when the author was an Assistant Professor of Mathematics at Vanderbilt University. In the latter stages, the author acknowledges support from the NSF TRIPODS program, grant number CCF--1423411.}}
\email{hamm@math.arizona.edu}
\dedicatory{To N. Sivakumar}
\begin{document}


\begin{abstract}
The Gibbs--Wilbraham phenomenon for generalized sampling series, and related interpolation series arising from cardinal functions is investigated. We prove existence of the overshoot characteristic of the phenomenon for certain cardinal functions, and characterize existence of an overshoot for sampling series.
\end{abstract}

\keywords{Gibbs phenomenon, Generalized sampling kernels, Cardinal functions, Sampling expansions}

\subjclass[2010]{41A05,41A30,42C15}

\maketitle

\section{Introduction}

It has long been observed that applying some smooth approximation method to functions with jump discontinuities leads to an overshoot phenomenon.  This observation for truncated Fourier series expansions of a periodic function is attributed to J. W. Gibbs owing to his short notes appearing in Nature in 1898 and 1899 \cite{Gibbs1,Gibbs2}.  However, as noted by Hewitt and Hewitt \cite{HewittGibbs}, this phenomenon was known to H. Wilbraham in 1848 \cite{Wilbraham}.  The interested reader is urged to consult their article, as it gives a fascinating description of the history of the phenomenon.
 
Gibbs' observation was precisely that despite the fact that the partial sums of the Fourier series of a periodic function converge pointwise to the function (or to the average value $\frac12(f(t+)+f(t-))$ across a jump discontinuity), the graph of the limit (i.e. of $f$), is not the (visual) limit of the graphs (of $S_N[f]$).  Stated another way, the set of limit points of the convergent partial sums lies outside of the interval determined by the jump discontinuity of the function.

Since then, the phenomenon has been explored for many other approximation methods, including spline interpolation \cite{FosterRichards,Richards} and wavelet expansions \cite{AtreasGibbs1,AtreasGibbs2, HuangGibbs, GibbsACHA,Shim}.  For more examples and a survey of the literature, see \cite{Jerri}.  The purpose of this short note is to examine the existence of a Gibbs--Wilbraham phenomenon for generalized sampling series, and related interpolation series arising from cardinal interpolants.  These are related to wavelet expansions, but also to approximants by shifts of positive definite functions, which include radial basis functions, for example.

\section{Generalized Sampling Series}

A function $\phi:\R\to\R$ is called a {\em generalized sampling kernel} (after Butzer, Ries, and Stens \cite{BRS}) provided $\phi\in C(\R)$, and the series $\sum_{n\in\Z}|\phi(t-n)|$ is uniformly convergent on $[0,1]$.  Given a generalized sampling kernel and a bounded function $f:\R\to\R$, the {\em sampling expansion} of $f$ is
\[S_W^\phi[f](t):=\sum_{n\in\Z}f\left(\frac nW\right)\phi(Wt-n),\quad t\in\R.\]

It is well-known that if $f$ has a compactly supported Fourier transform and $\phi$ is the cardinal sine function: $\sinc(x) = \sin(\pi x)/(\pi x)$ when $x\neq0$ and $\sinc(0)=1$, then $S_W^\sinc[f]=f$ for sufficiently large $W$ (this is the content of the Whittaker--Kotel'nikov--Shannon Sampling Theorem, \cite{Shannon}).

A characterization of Butzer, Ries, and Stens gives the following beautiful theorem.

\begin{theorem}[\cite{BRS}, Theorem 1]\label{THMBRS}
Suppose that $\phi:\R\to\R$ is a generalized sampling kernel.  Then the following are equivalent:
\begin{enumerate}[(i)]
\item $\displaystyle \sum_{n\in\Z}\phi(t-n)=1,\quad x\in[0,1)$.
\item For every bounded function $f:\R\to\R$, \[\underset{W\to\infty}\lim\; S_W^\phi[f](t)=f(t)\] for each $t\in\R$ which is a point of continuity of $f$.
\end{enumerate}
\end{theorem}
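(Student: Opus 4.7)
The plan is to prove the two implications separately.

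For (i)$\Rightarrow$(ii), I would first extend the partition-of-unity identity from $[0,1)$ to all of $\R$ by observing that the function $g(s):=\sum_{n\in\Z}\phi(s-n)$ is continuous and $1$-periodic (continuity follows from the uniform convergence of $\sum|\phi(\cdot-n)|$ on $[0,1]$, and periodicity is immediate from a shift of the summation index). Hence $g\equiv 1$ on $\R$. Multiplying this identity by $f(t)$ and subtracting from the definition of $S_W^\phi[f](t)$ gives
\[
S_W^\phi[f](t)-f(t)=\sum_{n\in\Z}\bigl[f(n/W)-f(t)\bigr]\phi(Wt-n),
\]
which I would control by a standard two-piece estimate. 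Given $\eps>0$, pick $\delta>0$ via continuity of $f$ at $t$. On the ``near'' set $\{n:|n/W-t|<\delta\}$ bound the summand by $\eps$ and factor out using the uniform bound $C:=\sup_{u\in\R}\sum_n|\phi(u-n)|<\infty$. On the ``far'' set $\{n:|n/W-t|\ge\delta\}$ use boundedness of $f$ together with the tail estimate $\sup_{u\in\R}\sum_{|u-n|\ge W\delta}|\phi(u-n)|\to 0$ as $W\to\infty$.

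For (ii)$\Rightarrow$(i), I would apply the hypothesis to the constant function $f\equiv 1$, which is continuous everywhere. Then $S_W^\phi[f](t)=g(Wt)$, and (ii) forces $g(Wt)\to 1$ for every $t\in\R$. Since $g$ is continuous and $1$-periodic, for any target $s\in[0,1)$ and any fixed $t_0>0$, choosing $W=(s+k)/t_0$ with $k\to\infty$ (so $W\to\infty$) yields $g(Wt_0)=g(s+k)=g(s)$, hence $g(s)=1$. (If the limit in (ii) is restricted to $W\in\N$, substitute a Weyl-equidistribution argument for irrational $t_0$ in place of the direct choice of $W$.)

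The main obstacle is justifying the uniform tail estimate $\sup_{u\in\R}\sum_{|u-n|\ge R}|\phi(u-n)|\to 0$ as $R\to\infty$, since the hypothesis only gives uniform convergence on $[0,1]$. The key is to write an arbitrary $u\in\R$ as $u=k+r$ with $k\in\Z$ and $r\in[0,1)$, then reindex $m=n-k$ so the tail becomes $\sum_{|r-m|\ge R}|\phi(r-m)|$ with $r\in[0,1)$; now the uniform convergence hypothesis on $[0,1]$ directly supplies the required decay. Once this estimate is in place, both implications follow from bookkeeping.
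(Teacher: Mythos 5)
Your proof is correct. Note, though, that the paper itself offers no proof of this statement: it is quoted verbatim from Butzer--Ries--Stens \cite{BRS}, so there is no internal argument to compare against. Your argument is the standard one (and essentially the one in the cited source): the periodization $g(s)=\sum_{n}\phi(s-n)$ reduces (i) to a global partition of unity, the telescoping identity $S_W^\phi[f](t)-f(t)=\sum_n[f(n/W)-f(t)]\phi(Wt-n)$ splits into near and far pieces, and the only delicate point --- upgrading uniform convergence on $[0,1]$ to the uniform tail bound $\sup_{u}\sum_{|u-n|\ge R}|\phi(u-n)|\to 0$ --- is handled correctly by reducing $u$ modulo $\Z$ and observing that $\{m:|r-m|\ge R\}\subseteq\{m:|m|\ge R-1\}$ for $r\in[0,1)$. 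The converse via $f\equiv 1$ and the subsequence $W_k=(s+k)/t_0$ is also sound, since $W$ is a continuous parameter here.
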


It should be noted that such sampling expansions are intimately related with the theories of reproducing kernel Hilbert spaces and shift-invariant spaces, but we will not dwell on this connection at the present moment.

In what follows, given $t\in\R$, we set $\displaystyle f(t-):=\lim_{x\to t^-}f(x)$ and $\displaystyle f(t+):=\lim_{x\to t^+}f(x)$.

\section{The Gibbs--Wilbraham Phenomenon}\label{SECGibbs}

The Gibbs--Wilbraham phenomenon has been well-studied for quite some time, and its influence is ubiquitous in smooth interpolation and approximation schemes targeting functions with jump discontinuities or cusps.  Its general description may be given as follows: suppose that $f$ is the pointwise limit of a convergent process, which we will denote by $T_N[f]$.  A Gibb's phenomenon is exhibited by this process provided that the set of its limit points is outside the range of $f$ itself.  In particular, suppose that $f(t-)< f(t+)$, then a Gibb's phenomenon is exhibited at $t$ provided$\{T_N[f](t+ \frac{\xi}{N}):N\in\N,\; \xi\neq t\}\supsetneq [f(t-),f(t+)]$.	As a particular example, if $T_N[f]$ is the trigonometric series $\sum_{n=-N}^N c_ne^{2\pi int}$ which best approximates the function $f(x) = \sgn(x)$ in $L_2[-\frac12,\frac12]$, then 
\[
\lim_{N\to\infty} T_N[f]\left(\frac \xi N\right) = 2 \int_0^\xi \frac{\sin(\pi x)}{\pi x}\;dx.
\]
Setting $\xi=1$ gives the absolute maximum of this function, which is approximately $1.17898$, and similarly setting $\xi=-1$ gives the absolute minimum.  Notice then that the set of limit points of $T_N[f]$ is the interval $[-1.17898,1.17898]$ which is strictly larger than the range of the jump discontinuity of $f$ at 0, which is $[-1,1]$.  One fascinating aspect of this phenomenon is that indeed this overshoot turns out to be dependent only on the magnitude of the jump discontinuity, but otherwise independent of the function $f$. 

For least-squares approximation as is described here, the pointwise values of $f$ are not so important, but if we consider generalized sampling series as above, evidently changing the values of $f$ could drastically alter the behavior of the sampling series.  To illustrate the Gibbs--Wilbraham phenomenon, we will suppose that $f$ has a jump discontinuity at $0$, and that $f(0)=f(0+)$.  One does get different behavior if one allows $f(0)$ to be some arbitrary number (e.g. \cite{AtreasGibbs2}), but our assumptions are sufficient to demonstrate existence of the Gibbs--Wilbraham phenomenon, and we do not intend to discuss how to mitigate its effects here.

\section{The General Gibbs--Wilbraham Function}

Given a generalized sampling kernel and a bounded function $f$ with a jump discontinuity at 0, satisfying $f(0)=f(0+)$, define the {\em Gibbs--Wilbraham function} associated with $f$ via
\begin{equation}\label{EQGibbs}
G_\phi[f](t):=f(0+)\sum_{n\geq0}\phi(t-n)+f(0-)\sum_{n<0}\phi(t-n).
\end{equation}
The reason for the terminology is that this is precisely the function which will classify the overshoot which is the characteristic feature of the Gibbs--Wilbraham phenomenon.  Indeed, considering the observation made in the previous section, we must consider the limit points of the sampling series, and hence for $t$ in a neighborhood of the origin, we are forced to consider
\[ S_N^\phi[f]\left(\frac tN\right) = \sum_{n\in\Z}f\left(\frac nN\right)\phi(t-n),\]
which as $N\to\infty$, converges pointwise and indeed uniformly to $G_\phi[f](t)$ as defined above (this follows from the boundedness of $f$, the fact that $\sum_{n\in\Z}|\phi(t-n)|$ is uniformly convergent, and the monotone convergence theorem, for example). 

\subsection{Properties of the Gibbs--Wilbraham Function}

Before proceeding, we pause to collect some basic facts about the Gibbs--Wilbraham functions.  

\begin{proposition}\label{PROPGibbsProperties}
Suppose $\phi$ is a generalized sampling kernel which satisfies one of the equivalent conditions of Theorem \ref{THMBRS}, $f$ and $g$ are bounded functions on $\R$, and $c\in\R$.  Then the following hold:
\begin{enumerate}[(i)]
\item $\displaystyle G_\phi[f\pm g] = G_\phi[f]\pm G_\phi[g];$
\item  $G_\phi[c]=c$;
\item $G_\phi[f+c] = G_\phi[f]+c.$
\end{enumerate}
\end{proposition}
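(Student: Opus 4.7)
The plan is to verify each of the three identities by direct computation from the definition \eqref{EQGibbs}, invoking the partition of unity property only where it is actually needed.

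For (i), since $f$ and $g$ are bounded and (implicitly) possess one-sided limits at $0$, we have $(f\pm g)(0+) = f(0+) \pm g(0+)$ and likewise for the left-hand limit. Substituting these into \eqref{EQGibbs} and distributing the sums gives
\[
G_\phi[f\pm g](t) = \bigl[f(0+)\pm g(0+)\bigr]\sum_{n\geq 0}\phi(t-n) + \bigl[f(0-)\pm g(0-)\bigr]\sum_{n<0}\phi(t-n),
\]
which is exactly $G_\phi[f](t)\pm G_\phi[g](t)$. No properties of $\phi$ beyond the convergence of the defining series are required here.

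For (ii), view the constant $c$ as the function identically equal to $c$; then $c(0+) = c(0-) = c$, so
\[
G_\phi[c](t) = c\sum_{n\geq 0}\phi(t-n) + c\sum_{n<0}\phi(t-n) = c\sum_{n\in\Z}\phi(t-n) = c,
\]
where the final equality is precisely condition (i) of Theorem \ref{THMBRS}. This is the only step in the proposition that uses the partition of unity hypothesis, which explains why it appears in the statement. For (iii), simply combine (i) and (ii): $G_\phi[f+c] = G_\phi[f] + G_\phi[c] = G_\phi[f] + c$.

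There is no real obstacle here; the proposition is essentially a bookkeeping exercise. The only minor point of care is ensuring that the rearrangement of sums in (i) is legitimate, which is guaranteed by the uniform absolute convergence of $\sum_{n\in\Z}|\phi(t-n)|$ built into the definition of a generalized sampling kernel.
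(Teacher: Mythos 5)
Your proof is correct and follows exactly the route the paper intends: the paper dispenses with the proposition in one line, stating it is ``evident from the definition in \eqref{EQGibbs} and the fact that $\sum_{n\in\Z}\phi(x-n)=1$,'' which is precisely the computation you carry out. Your added remarks---that the partition of unity is only needed for (ii), and that absolute convergence of $\sum_{n\in\Z}|\phi(t-n)|$ justifies the rearrangement in (i)---are accurate and fill in the details the paper leaves implicit.
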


The proof of the above Proposition is evident from the definition in \eqref{EQGibbs} and the fact that $\sum_{n\in\Z}\phi(x-n)=1$.

\subsection{Gibbs--Wilbraham Phenomenon}

With these considerations in mind, we make the following evident definition (cf. \cite{AtreasGibbs1}).

\begin{definition}
Suppose $\phi$ is a generalized sampling kernel which satisfies one of the equivalent conditions of Theorem \ref{THMBRS}.
\begin{itemize}
\item  If $f$ is a bounded function such that $f(0-)<f(0+)$, then $\phi$ exhibits a {\em left (resp. right) Gibbs--Wilbraham phenomenon} for $f$ provided there exists a $y<0$ (resp. an $x>0$) such that $G_\phi[f](y)<f(0-)$ (resp. $G_\phi[f](x)>f(0+)$).

\item The kernel $\phi$ exhibits a {\em strong Gibbs--Wilbraham phenomenon} for $f$ provided it exhibits both a left and a right Gibbs--Wilbraham phenomenon for $f$.

\item The kernel $\phi$ exhibits a left (resp. right, resp. strong) Gibbs--Wilbraham phenomenon provided it exhibits a left (resp. right, resp. strong) Gibbs--Wilbraham phenomenon for all bounded functions $f$.
\end{itemize}
\end{definition}

First, we note that the restriction that $f(0-)<f(0+)$ is no restriction at all.  Indeed, if $f(0-)>f(0+)$, we say that $\phi$ exhibits a left (respectively, right) Gibbs--Wilbraham phenomenon for $f$ if and only if $\phi$ exhibits a left (respectively, right) Gibbs--Wilbraham phenomenon for $-f$.

Secondly, we note that in reality, it suffices to consider the case when $f(0-)=-1$ and $f(0+)=1$, and thus in principle the case when $f$ is the unit step function $-\chi_{(-\infty,0)}+\chi_{[0,\infty)}$.  Indeed, suppose that $f(0-)<f(0+)$, and let $c\in\R$ be such that $f(0-)+c=-[f(0+)+c]$, and let $d\in\R$ be such that $df(0-)+dc = -1$, and hence $-d[f(0+)+c]=1$.  Then by Proposition \ref{PROPGibbsProperties}, $\phi$ exhibits a Gibbs--Wilbraham phenomenon for $f$ if and only if  it exhibits the phenomenon for $df+c$.  Thus we content ourselves with considering the general Gibbs--Wilbraham function 
\begin{equation}\label{EQGibbsReduction}
G_\phi(t):=\sum_{n\geq0}\phi(t-n)-\sum_{n<0}\phi(t-n).
\end{equation}

With these notions in hand, we may characterize the existence of a Gibbs--Wilbraham phenomenon for sampling series (we note that this theorem is essentially contained in \cite{AtreasGibbs1,Shim}, but our assumptions on $\phi$ above are more relaxed than in the latter, and we do not require $\phi$ to be the sampling function arising from a wavelet scaling function as in the former -- examples of this will be given in Section \ref{SECCardinal}).

\begin{theorem}\label{THMGibbsCharacterization}
Suppose $\phi$ is a generalized sampling kernel which satisfies one of the equivalent conditions of Theorem \ref{THMBRS}.  Then 
\begin{enumerate}[(i)]
\item $\phi$ exhibits a left Gibbs--Wilbraham phenomenon if and only if there exists a $y<0$ such that $\sum_{n\geq0}\phi(y-n)<0$.
\item  $\phi$ exhibits a right Gibbs--Wilbraham phenomenon if and only if there exists an $x>0$ such that $\sum_{n<0}\phi(x-n)<0$.
\item $\phi$ exhibits a strong Gibbs--Wilbraham phenomenon if and only if there exists $y<0$ and $x>0$ such that $\sum_{n\geq0}\phi(y-n)<0$ and $\sum_{n<0}\phi(x-n)<0$.
\end{enumerate}
\end{theorem}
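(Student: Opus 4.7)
The plan is to exploit the reduction already carried out in the preceding discussion. Using Proposition \ref{PROPGibbsProperties} together with the affine rescaling $f \mapsto df+c$, establishing the three parts for all bounded $f$ with jump $f(0-)<f(0+)$ reduces to establishing the same statements for the single canonical step function with $f(0-)=-1$ and $f(0+)=1$, whose associated Gibbs--Wilbraham function is exactly $G_\phi$ from \eqref{EQGibbsReduction}. Thus the three quantified statements over all bounded $f$ collapse to three existential statements about $G_\phi$: a left phenomenon amounts to the existence of $y<0$ with $G_\phi(y)<-1$, a right phenomenon to the existence of $x>0$ with $G_\phi(x)>1$, and a strong phenomenon to both simultaneously.

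The main engine of the proof is then the partition-of-unity property from Theorem \ref{THMBRS}(i), which I would use to rewrite $G_\phi$ in two alternative forms. For any $t\in\R$, since $\sum_{n\in\Z}\phi(t-n)=1$, one obtains on the one hand
\[
G_\phi(t)=2\sum_{n\geq 0}\phi(t-n)-1,
\]
and on the other hand
\[
G_\phi(t)=1-2\sum_{n<0}\phi(t-n).
\]
Plugging the first identity into the inequality $G_\phi(y)<-1$ yields the equivalence in part (i), namely $\sum_{n\geq 0}\phi(y-n)<0$. Plugging the second identity into $G_\phi(x)>1$ yields the equivalence in part (ii), namely $\sum_{n<0}\phi(x-n)<0$. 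Part (iii) is then immediate from the combination of (i) and (ii).

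Honestly, there is no serious obstacle here once the reduction to the canonical step has been made; the content of the theorem is entirely algebraic, and the only subtlety worth mentioning explicitly is that the uniform convergence of $\sum_{n\in\Z}|\phi(t-n)|$ (built into the definition of a generalized sampling kernel) is what allows us to split and recombine the tail sums $\sum_{n\geq 0}\phi(t-n)$ and $\sum_{n<0}\phi(t-n)$ freely. I would take care to write this justification once, at the point where the partition of unity is first invoked, and then simply read off the three equivalences.
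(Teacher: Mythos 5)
Your proposal is correct and follows essentially the same route as the paper: the reduction to the canonical step function via Proposition \ref{PROPGibbsProperties} is carried out in the text preceding the theorem, and the paper's proof of (i) is exactly your algebraic substitution of $\sum_{n\in\Z}\phi(y-n)=1$ into $G_\phi(y)<-1$ to obtain $2\sum_{n\geq0}\phi(y-n)<0$, with (ii) symmetric and (iii) the conjunction. Your extra remark on absolute convergence justifying the splitting of the sums is a harmless (and welcome) explicit note of something the paper leaves implicit.
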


\begin{proof}
For the proof of (i), note that $\phi$ exhibits a left Gibbs--Wilbraham phenomenon if and only if there exists a $y<0$ such that $G_\phi(y)<-1$.  By \eqref{EQGibbsReduction}, this is equivalent to
\[ \sum_{n\geq0}\phi(y-n)-\sum_{n<0}\phi(y-n) < -1\]
which is equivalent to
\[\sum_{n\geq0}\phi(y-n)-\left[\sum_{n<0}-\sum_{n\in\Z}\right]\phi(y-n) < 0.\]
Finally, this is equivalent to $2\sum_{n\geq0}\phi(y-n)<0$, which yields the desired conclusion.

The proof of (ii) follows by very similar reasoning, and so is omitted.  Item (iii) follows via combining (i) and (ii).
\end{proof}

\begin{corollary}\label{COREven}
Suppose $\phi$ is a generalized sampling kernel which satisfies one of the equivalent conditions in Theorem \ref{THMBRS}, and that $\phi$ is even.  Then the following are equivalent:
\begin{enumerate}[(i)]
\item $\phi$ exhibits a left Gibbs--Wilbraham phenomenon,
\item $\phi$ exhibits a right Gibbs--Wilbraham phenomenon,
\item $\phi$ exhibits a strong Gibbs--Wilbraham phenomenon.
\end{enumerate}
\end{corollary}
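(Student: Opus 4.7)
The plan is to invoke Theorem~\ref{THMGibbsCharacterization} together with the evenness of $\phi$ to identify the conditions for left and right Gibbs--Wilbraham phenomena. Since strong Gibbs--Wilbraham is by definition the conjunction of the left and right versions, (iii) is automatically equivalent to the conjunction of (i) and (ii); so the substantive step is to establish (i) $\Leftrightarrow$ (ii).

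By Theorem~\ref{THMGibbsCharacterization}, (i) asserts the existence of some $y < 0$ with $\sum_{n \geq 0}\phi(y-n) < 0$, and (ii) asserts the existence of some $x > 0$ with $\sum_{n < 0}\phi(x-n) < 0$. The bridge is the reflection $y = -x - 1$: using the evenness $\phi(-t) = \phi(t)$, a direct reindexing gives
\[
\sum_{n \geq 0}\phi(y - n) \;=\; \sum_{n \geq 0}\phi(x + 1 + n) \;=\; \sum_{m \geq 1}\phi(x + m) \;=\; \sum_{n < 0}\phi(x - n),
\]
so the left-Gibbs sum at $y$ equals the right-Gibbs sum at $x = -y-1$. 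Under this correspondence, the range $y < -1$ matches $x > 0$, immediately yielding (ii) $\Rightarrow$ (i).

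For the converse (i) $\Rightarrow$ (ii) with a left-Gibbs witness $y \in [-1,0)$ (and hence $x = -y-1 \in (-1,0]$), I would exploit the continuity of the partial sum $L(y) := \sum_{n \geq 0}\phi(y-n)$, the functional identity $L(y) + L(-y-1) = 1$ (a consequence of evenness combined with the partition-of-unity $\sum_{n} \phi(y-n) = 1$), and the shift recursion $L(z+1) = L(z) + \phi(z+1)$ to propagate the witness to some $x > 0$ with $L(x) > 1$, equivalently $\sum_{n < 0}\phi(x-n) < 0$. The main obstacle I foresee is precisely this interior case $y_0 \in (-1,0)$ with $\phi(y_0) < 0$, where the simple reflection and single-step shift do not immediately locate a right-Gibbs witness; a careful case analysis on the sign of $\phi(y_0) = \phi(-y_0)$ together with continuity and the boundary behavior $L(-\infty) = 0,\; L(+\infty) = 1$ should close this gap. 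Once (i) $\Leftrightarrow$ (ii) is in hand, (iii) is immediate from its defining conjunction.
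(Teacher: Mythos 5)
Your reflection argument for (ii)$\Rightarrow$(i) via $y=-x-1$ is exactly the paper's proof, and your reduction of (iii) to the conjunction of (i) and (ii) also matches. The problem is the direction (i)$\Rightarrow$(ii), which you correctly flag as breaking down when the left witness lies in $[-1,0)$ (there $x=-y-1\in(-1,0]$ is not a legal right witness). You propose to close this case by continuity, the identity $L(t)+L(-t-1)=1$, and the recursion $L(t+1)=L(t)+\phi(t+1)$, but you never carry the argument out, and in fact it cannot be carried out: the implication (i)$\Rightarrow$(ii) is false for general even kernels. For a concrete counterexample, let $M_2$ be the hat function, let $h(t)=-3e^{-t^2}$, and set
\[
\phi(t):=M_2(t)+2h(t)-h(t-1)-h(t+1).
\]
This $\phi$ is even, continuous, has $\sum_{n\in\Z}|\phi(t-n)|$ uniformly convergent, and satisfies $\sum_{n\in\Z}\phi(t-n)=1$. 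Writing $L(t)=\sum_{n\geq0}\phi(t-n)$ and $R(t)=\sum_{n<0}\phi(t-n)$, a telescoping computation gives $L(t)=L_{M_2}(t)+h(t)-h(t+1)$; since $L_{M_2}(x)=1$ for $x\geq0$, one finds
\[
R(x)=1-L(x)=h(x+1)-h(x)=3\bigl(e^{-x^2}-e^{-(x+1)^2}\bigr)\geq0\quad\text{for all } x>0,
\]
so there is no right phenomenon, while $L(-\tfrac14)=\tfrac34+h(\tfrac14)-h(\tfrac34)=\tfrac34-3\bigl(e^{-1/16}-e^{-9/16}\bigr)\approx-0.36<0$, so there is a left phenomenon at $y=-\tfrac14$. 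The ``careful case analysis'' you defer to therefore does not exist.

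For the comparison you were asked to make: the paper's own proof also never addresses (i)$\Rightarrow$(ii). It proves (ii)$\Rightarrow$(i) by the reflection, then (ii)$\Rightarrow$(iii) and (iii)$\Rightarrow$(ii), which establishes (ii)$\Leftrightarrow$(iii) and (ii)$\Rightarrow$(i) but not the full three-way equivalence as stated. So your instinct about the interior case is sound, and what you have located is a gap in the corollary itself, not merely in your own write-up. What does hold in general for even kernels is: (ii)$\Leftrightarrow$(iii), and either implies (i). Fortunately this is the only direction used downstream: Proposition \ref{PROPEven} and Corollary \ref{CORRegularInterpolators} produce a right witness at $x=\tfrac12$ (from $G_\phi(\tfrac12)=2\phi(\tfrac12)>1$) and then invoke (ii)$\Rightarrow$(iii), so those applications are unaffected. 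A correct resolution of your attempt is either to prove only the valid implications, or to add a hypothesis guaranteeing a left witness with $y<-1$.
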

\begin{proof}
(ii)$\Rightarrow$(i): Suppose $x>0$ is the point exhibiting the right Gibbs--Wilbraham phenomenon.  Then letting $y=-x-1<0$, we have
\[\sum_{n\geq0}\phi(y-n) = \sum_{n\geq0}\phi(-x-n-1) = \sum_{n\geq0}\phi(x+n+1) = \sum_{n<0}\phi(x-n)<0,\]
where the final inequality comes from Theorem \ref{THMGibbsCharacterization}.  Consequently, the point $y=-x-1$ exhibits the left Gibbs--Wilbraham phenomenon.

Note that since (ii) implies (i), it follows that (ii) implies (iii) since (iii) is equivalent to  (i)$+$(ii). Finally, (iii) implies (ii) by definition, hence the proof is complete.
\end{proof}

To conclude, let us remark that if $\phi$ is even, and either for every $x>0$, $\sum_{n\geq0}\phi(x-n)=1$, or for every $y<0$, $\sum_{n<0}\phi(y-n)=1$, then $\phi$ does not exhibit a Gibbs--Wilbraham phenomenon (see Theorem \ref{THMGibbsCharacterization}).  One simple example of this is the $B$--spline of order 2 given by $M_2:=\chi_{[-1/2,1/2]}\ast\chi_{[-1/2,1/2]}$.  Its compact support is what forces the condition that for all $x>0$, $\sum_{n\geq0}\phi(x-n)=1$, and hence no Gibbs--Wilbraham phenomenon exists for this function (see also \cite[Example 1]{Shim}).  On the other hand, it does provide a convergent sampling expansion as in Theorem \ref{THMBRS} \cite[Corollary 3]{BRS}.

\section{Cardinal Functions and Interpolation}\label{SECCardinal}

It was shown in \cite{BRS} that recovery of average values of functions at jump discontinuities by their generalized sampling series is incompatible with enforcing an interpolation condition.  That is, if one requires that $S_W^\phi[f](t)\to\alpha f(t+)+(1-\alpha)f(t-)$ for some $\alpha\in\R$ whenever $f$ has a jump discontinuity at $t\neq0$, then it follows that $\phi(0)$ must be $0$, and hence that $S_W^\phi[f]\left(\frac{k}{W}\right)\neq f\left(\frac{k}{W}\right)$ for all $k\in\Z$.  However, this need not be true if we allow $t=0$.  In this case, if $\phi$ satisfies the interpolatory condition $\phi(k)=\delta_{0,k}$, $k\in\Z$, then we evidently have $S_W^\phi[f]\left(\frac{k}{W}\right)=f\left(\frac{k}{W}\right)$ for all $W>0$ and all $k\in\Z$.  

If $\phi$ satisfies the integer interpolatory condition above, then it is often called a {\em cardinal function}.  One particular method of manufacturing cardinal functions is to define them via their Fourier transforms in the following manner: suppose $\psi$ is given, and let
\[\widehat{L_\psi}(\xi):=\dfrac{\widehat\psi(\xi)}{\underset{k\in\Z}\sum \widehat\psi(\xi-k)}.\]
If, for instance, the series in the denominator is bounded away from zero on $[0,1]$, and $\widehat\psi\in L_1(\R)$, then $L_\psi$ defined by the Fourier inversion formula is indeed a cardinal function \cite{Buhmann90}. 

We now turn to explore the Gibbs--Wilbraham phenomenon for certain cardinal functions which have been studied, for instance, in the radial basis function interpolation literature.  We will assume that $L_\psi$, for a given choice of $\psi$, is a generalized sampling kernel which is additionally a partition of unity as in Theorem \ref{THMBRS} item (i).  Some examples include the cardinal function associated with the Hardy multiquadric $\sqrt{x^2+1}$, and the Poisson kernel $(x^2+1)^{-1}$, \cite{Buhmann90}.  It also follows from \cite[Section 4]{HL} that the cardinal functions for generalized multiquadrics, $(x^2+1)^\alpha$ for any $\alpha\in(-\infty,-1]\cup[1/2,\infty)$, satisfy these conditions.  The impetus for analyzing such cardinal functions was Schoenberg's analysis of cardinal $B$--splines \cite{SchoenbergBook}.  Other examples are the radial powers $|x|^{2k+1}$ for any $k\in\N\cup\{0\}$ and the thin-plate splines $|x|^{2k}\ln|x|$, for $k\in\N$ \cite{Buhmann90}.

Many of these examples stem from radial basis functions, but the construction is more general.  Some numerical evidence for the existence of the Gibbs--Wilbraham phenomenon for radial basis function interpolation was given in \cite{FornbergFlyerGibbs}.  

Let us note first that if $\widehat{\psi}$ is even, then so is $\widehat{L_\psi}$, and consequently $L_\psi$.  In this case, Corollary \ref{COREven} implies that a strong Gibbs--Wilbraham phenomenon is exhibited by the given cardinal function provided either a left or right one can be shown.  

In many instances, families of generating functions indexed by a given parameter are considered, i.e. $(\psi_\alpha)_{\alpha\in A}$.  For example, the $n$-th order $B$--spline given by $M_n:=\chi_{[-1/2,1/2]}\ast\dots\ast\chi_{[-1/2,1/2]}$, where there are $n$ terms in the convolution.  Additionally, families of multiquadrics have been used: $\phi_c(x)=(x^2+c^2)^\alpha$, $c>0$ for a fixed $\alpha$ in the range prescribed above.  It is known in many cases that asymptotically in the parameter (e.g. as $n\to\infty$, $c\to\infty$, or $\alpha\to-\infty$) the cardinal functions converge to the classical sinc function uniformly.  In fact, Ledford \cite{LedfordCardinal} gives sufficient conditions on a one-parameter family of functions $(\phi_{\alpha})_{\alpha\in A}$, which he calls {\em regular families of cardinal interpolators}, for which the associated cardinal functions $L_{\phi_\alpha}$ converge to sinc uniformly (where the parameter $\alpha$ has a natural limiting value, which is typically forced to be $\infty$ by convention).  Let us now discuss the Gibbs--Wilbraham phenomenon for such families, beginning with the following.

\begin{proposition}\label{PROPEven}
Suppose $\phi$ is even, then the associated Gibbs--Wilbraham function satisfies
\[ G_{\phi}\left(\frac12\right) = 2\phi\left(\frac12\right).\]
\end{proposition}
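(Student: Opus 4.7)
The plan is to evaluate $G_\phi(1/2)$ directly from the defining series \eqref{EQGibbsReduction}, using the evenness of $\phi$ to match the terms of the second sum to the tail of the first sum. Since $\phi$ is a generalized sampling kernel, $\sum_{n \in \Z}|\phi(t-n)|$ converges at $t = 1/2$, so any rearrangement of the two series is justified without further comment.

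First I would unpack the definition:
\[
G_\phi\!\left(\tfrac12\right) = \sum_{n\geq 0}\phi\!\left(\tfrac12 - n\right) - \sum_{n<0}\phi\!\left(\tfrac12 - n\right).
\]
Next I would apply evenness to each sum. In the first sum, the $n = 0$ term is $\phi(1/2)$; for $n \geq 1$, evenness gives $\phi(1/2 - n) = \phi(n - 1/2)$, so the sum becomes $\phi(1/2) + \sum_{n\geq 1}\phi(n - 1/2)$. In the second sum, reindexing by $k = -n$ with $k \geq 1$ gives $\sum_{k\geq 1}\phi(1/2 + k)$, and no further use of evenness is needed since the argument is already positive.

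The key observation is then that the index sets $\{n - 1/2 : n \geq 1\} = \{1/2,\, 3/2,\, 5/2,\ldots\}$ and $\{k + 1/2 : k \geq 1\} = \{3/2,\, 5/2,\, 7/2,\ldots\}$ differ by exactly the single element $1/2$. Therefore
\[
\sum_{n\geq 1}\phi\!\left(n - \tfrac12\right) = \phi\!\left(\tfrac12\right) + \sum_{k\geq 1}\phi\!\left(k + \tfrac12\right) = \phi\!\left(\tfrac12\right) + \sum_{n<0}\phi\!\left(\tfrac12 - n\right).
\]
Substituting this into the expression for $G_\phi(1/2)$ makes the infinite tails cancel and leaves $2\phi(1/2)$, as desired.

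There is no real obstacle here; the result is essentially a bookkeeping identity. The only mild point requiring attention is the legitimacy of rearranging the series, which is automatic from the uniform absolute convergence guaranteed by the generalized sampling kernel hypothesis implicit in the definition of $G_\phi$.
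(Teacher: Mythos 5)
Your proof is correct and follows essentially the same route as the paper's: both evaluate the defining series at $t=\tfrac12$, peel off one extra copy of $\phi(\tfrac12)$, and use evenness to cancel the remaining tails term by term. The only difference is cosmetic bookkeeping in where the reindexing happens.
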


\begin{proof}
Putting $t=1/2$ into \eqref{EQGibbsReduction} yields
\begin{align*}G_{\phi}\left(\frac12\right) & = \phi\left(\frac12\right) + \sum_{n=1}^\infty\phi\left(\frac12-n\right)-\sum_{n=1}^\infty\phi\left(\frac12+n\right)\\
& = \phi\left(\frac12\right)+\phi\left(-\frac12\right) + \sum_{n=1}^\infty\phi\left(\frac12-(n+1)\right)-\sum_{n=1}^\infty\phi\left(\frac12+n\right)\\
& = 2\phi\left(\frac12\right),
\end{align*}
where evenness of $\phi$ was used in the final step.
\end{proof}

\begin{corollary}\label{CORRegularInterpolators}
Suppose that $(\phi_\alpha)_{\alpha\in A}$ is a regular family of cardinal interpolators such that $\phi_\alpha$ is even for every $\alpha$, and is a generalized sampling kernel satisfying one of the equivalent conditions of Theorem \ref{THMBRS}.  Then for sufficiently large $\alpha$, $L_{\phi_\alpha}$ exhibits a strong Gibbs--Wilbraham phenomenon.
\end{corollary}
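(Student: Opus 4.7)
The plan is to combine Proposition \ref{PROPEven} with the defining property of a regular family of cardinal interpolators (uniform convergence of $L_{\phi_\alpha}$ to $\sinc$) to produce a single point $x=1/2$ at which the Gibbs--Wilbraham function exceeds $1$ for all large $\alpha$, and then invoke Corollary \ref{COREven} to upgrade this to the strong conclusion.

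First I would record that each $L_{\phi_\alpha}$ is even. Since $\phi_\alpha$ is real and even, $\widehat{\phi_\alpha}$ is real and even, so the quotient defining $\widehat{L_{\phi_\alpha}}$ is even, and therefore $L_{\phi_\alpha}$ is even as noted in the text preceding Proposition \ref{PROPEven}. Consequently Proposition \ref{PROPEven} applies and yields
\[
G_{L_{\phi_\alpha}}\!\left(\tfrac{1}{2}\right) = 2\, L_{\phi_\alpha}\!\left(\tfrac{1}{2}\right).
\]

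Next I would use the hypothesis that $(\phi_\alpha)$ is a regular family of cardinal interpolators in the sense of Ledford, so that $L_{\phi_\alpha} \to \sinc$ uniformly on $\R$ as $\alpha$ tends to its limiting value (taken to be $\infty$). In particular, evaluation at $1/2$ gives
\[
L_{\phi_\alpha}\!\left(\tfrac{1}{2}\right) \longrightarrow \sinc\!\left(\tfrac{1}{2}\right) = \frac{2}{\pi},
\]
so that $G_{L_{\phi_\alpha}}(1/2) \to 4/\pi$. Since $4/\pi > 1$, there exists $\alpha_0$ such that $G_{L_{\phi_\alpha}}(1/2) > 1$ for all $\alpha \geq \alpha_0$, which (in the reduced formulation with $f(0+)=1$) is exactly the right Gibbs--Wilbraham phenomenon at the point $x = 1/2$.

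Finally, because each $L_{\phi_\alpha}$ is an even generalized sampling kernel satisfying the partition of unity condition, Corollary \ref{COREven} promotes the existence of a right Gibbs--Wilbraham phenomenon to a strong one, completing the proof. The essentially routine character of the argument makes it hard to point to a genuine obstacle; the only care needed is in verifying that the hypotheses of Proposition \ref{PROPEven} and Corollary \ref{COREven} (evenness, sampling kernel, partition of unity) are inherited by $L_{\phi_\alpha}$ from $\phi_\alpha$, and in confirming that the natural limit value $4/\pi$ is strictly greater than $1$, which is an unconditional numerical fact rather than something requiring further estimation.
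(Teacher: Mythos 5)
Your proof is correct and follows essentially the same route as the paper: apply Proposition \ref{PROPEven} to the even cardinal function $L_{\phi_\alpha}$, use Ledford's uniform convergence $L_{\phi_\alpha}\to\sinc$ to get $G_{L_{\phi_\alpha}}(1/2)\to 2\sinc(1/2)=4/\pi>1$, and then invoke Corollary \ref{COREven} to upgrade the resulting right phenomenon to a strong one. The paper's version is merely terser, leaving the evenness of $L_{\phi_\alpha}$ and the appeal to Corollary \ref{COREven} to remarks made earlier in Section \ref{SECCardinal}.
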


\begin{proof}
From \cite[Proposition 2]{LedfordCardinal}, $L_{\phi_\alpha}\to\sinc$ uniformly, so appealing to Proposition \ref{PROPEven}, if suffices to notice that $2\sinc(1/2)=4/\pi>1$.
\end{proof}

Some examples of families satisfying the conditions of Corollary \ref{CORRegularInterpolators} are the generalized multiquadrics $(\phi_c)_{c\geq1}$ for a fixed $\alpha\in(-\infty,-3/2]\cup[1/2,\infty)\setminus\N$ (however, if one appeals to the more specific analysis of \cite{HL}, one finds that the permissible range of $\alpha$ may be extended to $(-\infty,-1]\cup[1/2,\infty)\setminus\N$).  Additionally, if one considers multiquadrics $\phi_\alpha(x)=(x^2+1)^\alpha$ and allows $\alpha$ to vary, then $(\phi_\alpha)_{\alpha\leq-1}$ and $(\phi_{\alpha_j})_{j\in\N}$ where $(\alpha_j)\subset[1/2,\infty)$ is unbounded with $\text{dist}(\{\alpha_j\},\N)>0$ yield cardinal functions which exhibit a strong Gibbs--Wilbraham phenomenon for large parameter. 

Moreover, the $B$--splines of order $n$, with $n$ tending toward $\infty$ provide another example of a family exhibiting a strong Gibbs--Wilbraham phenomenon.  That $L_{M_n}$ satisfies condition (i) of Theorem \ref{THMBRS} is known (it follows easily from the fact that $\widehat{M_n}(\xi)=\sinc^n(\xi)$ and the Poisson Summation formula).  Additionally, we find from \cite[Lecture 9]{SchoenbergBook} that $L_{M_n}\to\sinc$ uniformly as $n\to\infty$.

While the above corollary only demonstrates the overshoot phenomenon for large parameter, numerical experiments reveal that even for small shape parameter $c\geq1$ in the multiquadrics, for example, the cardinal function is quite close to the corresponding value of sinc.  See also \cite{FornbergFlyerGibbs} for illustrations of this fact.

Let us mention that one important family used in cardinal interpolation is not covered here: namely, the Gaussians $(e^{-|x/\alpha|^2})_{\alpha\geq1}$.  The family of cardinal functions associated with the Gaussian is well-known to provide recovery of Paley--Wiener (or bandlimited) functions (e.g. \cite[Theorem 3.7]{BaxterSiva}).  However, the cardinal functions do not satisfy condition (i) of Theorem \ref{THMBRS}, \cite{Buhmann90}.  Thus, the sampling expansion related to the Gaussian does not give convergence at all points of continuity of bounded functions, and hence speaking of a Gibbs--Wilbraham phenomenon in this case does not precisely make sense (cf. our remarks in Section \ref{SECGibbs}).   

To conclude, we note that in some instances if the function $\psi$ decays sufficiently fast and the symbol $\sum_{k\in\Z}\widehat{\psi}(\xi-k)$ and its reciprocal are in the Wiener algebra of functions whose Fourier coefficients are summable, then the cardinal interpolant may be written in a different form.  Specifically, there are unique $\ell_\infty$ coefficients $(a_n)$ such that 
\[S_W^{L_\psi}[f] (t) = \sum_{n\in\Z}f\left(\frac nW\right)L_\psi(Wt-n) = \sum_{n\in\Z}a_n\psi(Wt-n),\]
where the equality and convergence of the series is in $L_\infty$.  The condition on the decay of $\psi$ is that $\sum_{n\in\Z}\|\psi(\cdot-n)\|_{L_\infty[0,1]}<\infty$, which is often written $\psi\in W(L_\infty,\ell_1)$, where the space defined thusly is Wiener's space.  The proof of this fact follows from the argument in \cite[Theorem 3.2]{HL2}, and the multiquadrics for negative exponent $\alpha$ are examples of functions satisfying these criteria.

\section{Summary}

In this brief note, we have discussed the characterization of a Gibbs--Wilbraham phenomenon for generalized sampling series.  For generalized sampling kernels, the existence of this phenomenon may be reduced to considering the canonical case of approximating the function $f(x)=\sgn(x)$ with the convention that $\sgn(0)=1$.  Moreover, existence of the phenomenon is purely determined by a series of translates of the sampling kernel $\phi$ (Theorem \ref{THMGibbsCharacterization}).  In contrast to previous works analyzing the Gibbs--Wilbraham phenomenon for wavelet sampling series, we have not required the sampling kernel to be the scaling function of a wavelet system.  Moreover, our considerations for cardinal functions arising from radial basis functions provides a different proof than that indicated by the numerical observations of \cite{FornbergFlyerGibbs}.  Additionally, we found that Ledford's conditions for regular families of cardinal interpolators yields a variety of examples of sampling kernels whose cardinal functions exhibit a Gibbs--Wilbraham phenomenon.

\bibliographystyle{plain}
\bibliography{GibbsBib}

\end{document}